\newtheorem{lemma}{Lemma}
\newtheorem{theorem}{Theorem}
\numberwithin{table}{section}
\def \sign{{\rm sign}}
\def\var{{\rm Var}}
\newcommand{\vu}{\mbox{\boldmath $u$}}
\newcommand{\vx}{\mbox{\boldmath $x$}}
\newcommand{\vs}{\mbox{\boldmath $s$}}
\newcommand{\vbeta}{\mbox{\boldmath $\beta$}}
\newcommand{\vmu}{\mbox{\boldmath $\mu$}}
\newcommand{\vzero}{\mbox{\boldmath $0$}}
\begin{document}

\begin{center}
{\Large\bf  
Selection Consistency of EBIC for GLIM with Non-canonical Links and Diverging Number of Parameters\\}

\vspace{0.15in}

{\sc By}  SHAN LUO${}^{1}$ {\sc and}  ZEHUA CHEN${}^{2}$\\
\vspace{0.15in}

${}^{1,2}$Department of Statistics and Applied Probability \\
National University of Singapore \\
${}$ \\
Email: ${}^{1}$luoshan08@nus.edu.sg,   ${}^{2}$stachenz@nus.edu.sg. 
\vspace{0.5in}

\end{center}

\begin{quotation}
\noindent {\it Abstract:}
In this article, we investigate the properties of the EBIC in variable selection for generalized linear models with non-canonical links and diverging number of parameters in ultra-high dimensional feature space. The selection consistency of the EBIC in this situation is established under moderate conditions.  The finite sample performance of the EBIC coupled with a forward selection procedure is demonstrated through simulation studies and a real data analysis. 

\vspace{9pt}
\noindent {\it Key words and phrases:}
Variable selection, Generalized linear model, non-canonical links, extended Bayesian Information Criterion, selection consistency.
\par
\end{quotation}

\section{Introduction}
\label{secintro}

Variable selection is a primary concern in many important contemporary scientific fields such as signal processing,  medical research and genetic studies etc..  In these fields, usually, a relatively small set of relevant variables need to be selected from a huge collection of available variables.
For example,   in genetic genome-wide association studies (GWAS),  to identify loci or genes that affect a quantitative trait  or a disease status, thousands of thousands, even minions,  of single nucleotide polymorphisms (SNP) are under consideration.   The number of variables is much larger than the sample size in such studies. This phenomenon is referred to as small-$n$-large-$p$.  Variable selection in small-$n$-large-$p$ problems poses a great challenge.  

A major approach for variable selection is model based; that is,  a model is formulated to describe the relationship between a response variable (e.g., the measurement of a quantitative trait) and a set of predictor variables or covariates (e.g., the genotypes of SNPs), and the covariates are selected by a certain variable selection criterion.  A variable selection criterion is  crucial in model based variable selection.   Traditional variable selection criteria such as Akaike's Information Criterion (AIC) (Akaike, 1973),  Bayes Information Criterion (BIC) (Schwarz (1978)) and Cross Validation (CV) (Stone (1974)) are no longer appropriate for variable selection in small-$n$-large-$p$ problems. These traditional criteria tend to select too many irrelevant covariates because they are generally not selection consistent.   Recently, some BIC-type criteria have been proposed for small-$n$-large-$p$ problems.   Bogdan et al. (2004) considered a criterion called modified BIC (mBIC) for QTL mapping models.  Wang et al. (2009) studied another modified BIC for models with diverging number of parameters.  Chen and Chen (2008) extended the original BIC to a family called  extended BIC (EBIC) governed by a parameter $\gamma$.  

The criterion considered by Wang et al. (2009) modifies the original BIC by multiplying the second term of BIC with a diverging parameter and is somehow ad hoc. To achieve selection consistency, it  requires $p/n^{\xi} <1$ for some $0<\xi<1$, and hence is not applicable when $p>n$.  The mBIC and EBIC  considered by Bogdan et al. (2004) and Chen and Chen (2008) respectively are developed from a Bayesian framework.  For the mBIC, a binomial prior on the number of covariates is imposed on each model. For EBIC, the prior on a model is proportional to a power of the size of model class which the model belongs.  Asymptotically, mBIC is a special case of EBIC corresponding to  $\gamma = 1$.  The selection consistency of EBIC for linear models with fixed number of parameters is established in Chen and Chen (2008). The result is then extended to generalized linear models (GLIM) with canonical links in Chen and Chen (2012).  The EBIC has been used for choosing tuning parameters in penalized likelihood approaches, see Huang  et al. (2010), for feature selection procedures, see Wang (2009) and Luo and Chen (2011), and for QTL mapping and disease gene mapping studies, see Li and Chen (2009) and Zhao and Chen (2012).  

 In GLIMs,  canonical links do not always provide the best fit.  Generally,  there is no reason apriori why a canonical link  should be used, and in many cases a non-canonical link is more preferable,  see McCullagh and Nelder (1989) and  Czado and Munk (2000).  
In many conventional scientific fields such as those  mentioned at the beginning of this article, it becomes a norm that the number of covariates under consideration is so large that it can be considered as having an exponential order of the sample size.  This is referred to as the case of ultra-high dimensional feature space.  In problems such as QTL and disease gene mapping,  a quantitative trait or disease status is usually affected by many loci. Except a few so-called major genes,  most of the loci have only a small effect which cannot be detected when the sample size is small. As the sample size increases, so does the number of detectable such effects.  This phenomenon is mathematically well modeled by diverging number of parameters, i.e., the number of truly relevant covariates diverges as the sample size increases.  Therefore the GLIMs with non-canonical links and diverging number of parameters in the case of ultra-high dimensional feature space become appealing.   In this article, we investigate the properties of EBIC for such models and establish its selection consistency. 
 The selection consistency of EBIC for GLIMs with canonical links does not trivially pass to the case of non-canonical links.
The selection consistency in the case of non-canonical links is established under more general conditions than those in Chen and Chen (2012). The conditions, though general, are naturally satisfied by many popular examples as given  in Wedderburn (1976).   We also present  a forward selection procedure with  EBIC for the GLIMs.   This procedure is applied in simulation studies and a real data analysis to evaluate its validity.  

The remainder of this article is organized as follows. In section 2, the main results are presented and discussed.  
In section 3,  simulation studies are reported and analyzed.  In section 4,  the forward selection procedure with EBIC is applied to analyze
a well known Leukemia data set published in Golub et al (1999).   All the technical proofs are provided in the Appendix.  

\section{Selection Consistency of EBIC for GLIM with non-canonical links}
\label{secmain}

Let $(y_i, \vx_i),  i=1, \dots, n, $ be the observations where $y_i$ is a response variable and $\vx_i = (x_{i1}, \dots, x_{ip_n})^\tau $ is a $p_n$-vector of  covariates. We consider the generalized linear model (GLIM) below:
\[ y_i \sim  f(y_i; \theta_i) = \exp\{ \theta_i y_i - b(\theta_i) \} \ \mbox{w.r.t.} \ \nu,  \ \ i =1, \dots, n, 
\] 
where $\nu$ is a $\sigma$-finite measure.  From the properties of exponential family, we have
\[ \mu (\theta_i) = E(y_i) =  b^{'}(\theta_i),  \   \sigma^2 (\theta_i)  = \var(y_i) =   b^{''}(\theta_i), \]
where $b^{'}$ and $b^{''}$ are the first and the second derivatives of $b$ respectively. 
The $\theta_i$ is related to $\vx_i$ through the relationship:
\[g(\mu (\theta_i))  = \eta_i =  \vx_i^{\tau}\vbeta,\]
where $g$ is a monotone function called link function and $\vbeta$ is $p_n$-dimensional parameter vector. If $g(\mu(\theta_i)) = \theta_i$, i.e., $ g = \mu^{-1}$, the link is called the canonical link.  In this article, we consider general link functions including the canonical link. Because of the one-to-one correspondence  between $\theta_i$ and $\eta_i$, there is a function $h$ such that $\theta_i = h(\eta_i) = h(\vx_i^{\tau}\vbeta)$.  If $g$ is twice differentiable, so is $h$.  Thus the probability density function of $y_i$ can be expressed as 
\[  f(y_i; h(\vx_i^\tau \vbeta)) = \exp\{  y_i h(\vx_i^\tau \vbeta)- b(h(\vx_i^\tau \vbeta)) \} .\]

In the above GLIM, we assume that $p_n = O(\exp\{n^\kappa\})$, $0 < \kappa < 1$,  and that there are only a relatively small number of components of $\vbeta$ are nonzero.  Throughout the article, the following notation and convention are used.  Denote by $s$ any subset of the index set ${\cal S} = \{1, 2, \dots, p_n\}$ and $|s|$ its cardinality. For convenience, $s$ is used exchangeably to denote both an index set and the set of covariates with indices in the index set, and is also referred to as a model, i.e., the GLIM consisting of the covariates in $s$.  Let $s_{0n} = \{ j: \beta_j \neq 0, j=1, \dots, p_n\}$ and $p_{0n} = |s_{0n}|$.  The covariates belonging to $s_{0n}$ are called relevant features and the others irrelevant features.  $s_{0n}$ is also referred to as the true model.  Let $X = (\vx_1^\tau, \dots, \vx_n^\tau)^\tau$. Denote by $X(s)$ the sub matrix formed by the columns of $X$ whose indices falling into $s$. Let $\vx_{i}^s$ be the vector consisting of the components of $\vx_i$ whose indices belonging to $s$, and let $\vbeta^s$ be the corresponding sub vector  of $\vbeta$.  Let $S_j$ denote the set of ${p_n \choose j}$ combinations of $j$ indices from ${\cal S}$.  Denote $\tau(S_j) = {p_n \choose j}$.

The EBIC of a model $s$, as  defined in  Chen and Chen (2008), is 
\[ \mbox{EBIC}_{\gamma}(s)=-2\ln L_n\left(\hat{\vbeta^s}\right)+|s| \ln n+2\gamma\ln \tau(S_{|s|}), \gamma\geq 0,
\]
where $L_n(\hat{\vbeta^s})$ is the maximum likelihood of model $s$ and $\hat{\vbeta^s}$ is the maximum likelihood estimate (MLE) of $\vbeta^s$.

Denote by $l_n(\vbeta^s), \vs_n(\vbeta^s)$ and $H_n(\vbeta^s)$ the log likelihood function, the score vector and the Hessian matrix of the model $s$ respectively. Suppose the link function $g$ is twice differentiable, we have
\[ \begin{split}
l_n (\vbeta^s)=& \sum_{i=1}^n [y_i h(\vx_i^{s\tau}\vbeta^s )-b (h (\vx_i^{s\tau}\vbeta^s ))] \\
\vs_n(\vbeta^s)=&\frac{\partial l_n(\vbeta^s)}{\partial \vbeta^s}=\sum_{i=1}^n[y_i-b^{'}(h(\vx_i^{s\tau}\vbeta^s))] h^{'}(\vx_i^{s\tau}\vbeta^s )\vx _i^s \\
H_n(\vbeta^s)=& -\dfrac{\partial^2l_n (\vbeta^s)}{\partial \vbeta^s \partial \vbeta^{s \tau}} \\
                      = & \sum_{i=1}^n \{ b^{''}(h(\vx_i^{s\tau}\vbeta^s) ) [h^{'}(\vx_i^{s\tau}\vbeta^s)]^2
                         -  [y_i-b^{'}(h(\vx_i^{s\tau}\vbeta^s))] h^{''}(\vx_i^{s\tau}\vbeta^s ) \} \vx_i^s\vx_i^{s\tau}\\
                      = &  H_{n1}(\vbeta^s) - H_{n0}(\vbeta^s),    \ \ \mbox{say}.
\end{split}
\]
When $s_{0n} \subset s$, we simply denote $\mu_i = b^{'}(h(\vx_i^{s\tau}\vbeta^s))$ and $\sigma_i^2 =b^{''}(h(\vx_i^{s\tau}\vbeta^s))$.  The major difference between the case of canonical links and the case of non-canonical links is as follows.  If $g$ is the canonical link, $h^{'} \equiv 1$ and $h^{''} \equiv 0$,  hence $H_{n0} \equiv 0$ and $H_n(\vbeta^s)$ is positive definite when $X(s)$ is of full column rank.  Therefore, $l_n(\vbeta^s)$ is a strictly concave function of $\vbeta^s$.  But, if $g$ is a non-canonical link, $H_n(\vbeta^s)$ is not necessarily positive definite. As a consequence,  $l_n(\vbeta^s)$ is not necessarily concave, and the maximum likelihood estimate of $\vbeta^s$ does not necessarily exist. We will show that  $H_{n0}(\vbeta^s) $ is asymptotically negligible (Lemma \ref{lemma1}) for $\vbeta^s$ in a neighborhood of the true parameter value of the GLIM. Thus  $H_n(\vbeta^s)$ is asymptotically locally positive definite.  To guarantee the existence of the MLE of $\vbeta^s$ for finite samples, we assume that the link function $g$ is chosen such that $l_n(\vbeta^s)$ has a unique maximum.  We now state the conditions required for the selection consistency of the EBIC.

\begin{description}
  \item[C1]   $\ln (p_n)=O(n^{\kappa}),  p_{0n}=O(n^{b})$ where $b\geq 0, \kappa >0$ and $b+\kappa < 1/3$;
\item[C2] $\min_{j\in s_{0n}}|\beta_{j}|\geq Cn^{-1/4}$ for some constant $C>0$;
\item[C3] For any $s$, the interior of  
${\cal B}(s)=\{\vbeta: \int \exp( h(\vx_i^{s\tau} \vbeta ) y) d \nu < \infty, i=1,2,\dots, n \}$ is not
empty. Let $\vbeta_0$ denote the true parameter of the GLIM. If  $|s| \leq k p_{0n}$, where $k>1$,  then $\vbeta_0^{s}$ is in the interior of ${\cal B}(s)$.
\item[C4]There exist positive $c_1$ and $c_2$ such that for all sufficiently
large $n$,
\[ c_1n\leq \lambda_{\min}(H_{n1}(\vbeta_0^{s\cup s_{0n}})) \leq \lambda_{\max}(H_{n1}(\vbeta_0^{s\cup s_{0n}})) \leq c_2n \] 
for all $s$ with $|s| \leq k p_{0n}$,  where
$\lambda_{\min}$ and $\lambda_{\max}$ denote respectively the smallest and
largest eigenvalues;
\item[C5]
For any given $\xi>0,$ there exists a $\delta>0$ such that when $n$
is sufficiently large,
\[(1-\xi)H_{nj}(\vbeta_0^{s\cup s_{0n}} ) \leq H_{nj}(\vbeta^{s\cup s_{0n}}) \leq(1+\xi)  H_{nj}(\vbeta_0^{s\cup s_{0n}} ), j=0,1, 
\]
whenever  $\|\vbeta^{s\cup s_{0n}} -\vbeta_0^{s \cup s_{0n}} \|_2\leq \delta$ for all $s$ with $|s| \leq k p_{0n}$; 
\item[C6] The quantities $|x_{ij}|, |h^{'}(\vx_i^{\tau}\vbeta_0)|, |h^{''}(\vx_i^{\tau}\vbeta_0)|, i =1, \dots, n; j =1, \dots, p_n$ are bounded from above, and $\sigma_i^2, i=1, \dots, n$ are bounded both from above and  below away from zero. Furthermore, 
\[
\begin{split}
\max_{1\leq j\leq p_n; 1 \leq i\leq n} \frac{  x_{ij}^2 [h^{'}(\vx_i^{\tau}\vbeta_0)]^2  }{ \sum_{i=1}^n \sigma_i^2 x_{ij}^2[h^{'}(\vx_i^{\tau}\vbeta_0)]^2 } &=o(n^{-1/3})\\
\max_{1\leq i\leq n}\frac{ [h^{''}(\vx_i^{\tau}\vbeta_0)]^2 }{ \sum_{i=1}^n\sigma_i^2[h^{''}(\vx_i^{\tau}\vbeta_0)]^2 }&=o(n^{-1/3}).
\end{split}
\]
\end{description}

Conditions C2 and C3 are the same as conditions A2 and A3 in Chen and Chen (2012). Conditions C4 - C5 reduce to conditions A4-A5 in Chen and Chen (2012) for canonical links. When A6 in Chen and Chen (2012) is satisfied,  C6 is satisfied by commonly used GLIMs such as Poisson distribution with log and power function links, Binary distribution with identity, arcsin, complementary log-log and probit links, Gamma distribution with log and inverse power function links.  These GLIMs are throughly studied in Wedderburn (1976). The verification of C6 for these GLIMs is given in a complementary document at website: \verb+ http://www.stat.nus.edu.sg/~stachenz/+.

We now state our main results as follows. 
Define ${\cal A}_0 = \{ s: s_{0n} \subset s,  s_{0n} \neq s, |s| \leq k p_{0n} \}$ and ${\cal A}_1 = \{ s: s_{0n} \not \subset s, |s| \leq k p_{0n} \}$. We have

\begin{theorem}
\label{SelectionConsistency}
Under assumptions C1-C6,  as $n\rightarrow +\infty$,
\begin{itemize}
\item[(1)] $P\left(\min_{s\in \mathcal{A}_1}\mbox{EBIC}_{\gamma}(s)\leq \mbox{EBIC}_{\gamma}(s_{0n})\right)\rightarrow 0, 
\ \ \text{for any} \ \  \gamma>0;$
\item[(2)] $P\left(\min_{s\in
\mathcal{A}_0}\mbox{EBIC}_{\gamma}(s)\leq \mbox{EBIC}_{\gamma}(s_{0n})\right)\rightarrow 0, \ \ \text{for
any}\ \  \gamma>\dfrac{1}{1-\epsilon}\left(1-\dfrac{\log n}{2\log p_n}\right),$ where $\epsilon$ is an arbitrarily small positive constant.
\end{itemize}
\end{theorem}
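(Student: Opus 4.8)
The plan is to bound the two probabilities separately, since they rest on opposite mechanisms: part~(1) is a signal-detection argument (an underfitted model loses too much maximized likelihood), whereas part~(2) is a complexity-penalty argument (an overfitted model gains too little likelihood to pay for the extra EBIC penalty). The common engine is a local quadratic expansion of $l_n$ valid \emph{uniformly} over candidate models, and this is exactly where Lemma~\ref{lemma1} and C4--C6 are used. Because the links are non-canonical, $l_n$ need not be concave; but Lemma~\ref{lemma1} shows $H_{n0}$ is asymptotically negligible, C5 pins the Hessian to its value at $\vbeta_0$ on a $\delta$-ball, and C4 confines $H_{n1}$ between $c_1 n$ and $c_2 n$, so on a neighbourhood of the truth $H_n$ is uniformly positive definite of order $n$. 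I would first isolate, as a preliminary lemma, the statement that for every $s\supset s_{0n}$ with $|s|=O(p_{0n})$ the MLE $\hat{\vbeta}^s$ falls in the $\delta$-ball about $\vbeta_0^s$ with probability tending to one, and that there $2[\ln L_n(\hat{\vbeta}^s)-\ln L_n(\vbeta_0^s)]$ is sandwiched between quadratic forms $\vs_n(\vbeta_0^s)^\tau H_n^{-1}\vs_n(\vbeta_0^s)$ with constants $(1\pm\xi)$. Condition C6 is precisely the Lindeberg/Bernstein-type control of the score coordinates that lets these uniform statements survive the union bound over the $O(\exp\{n^\kappa\})$ models.

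For part~(1), fix $s\in\mathcal{A}_1$ and compare it with the enlarged, correctly specified model $s^\ast=s\cup s_{0n}$. Since $s\subset s^\ast$, the fit of $s$ is the fit of $s^\ast$ with the coordinates in $s^\ast\setminus s$ --- which contain at least one index of $s_{0n}$ --- constrained to zero. A second-order expansion about $\hat{\vbeta}^{s^\ast}$ together with $\lambda_{\min}(H_n)\ge c_1 n(1-\xi)$ then gives
\[
2\big[\ln L_n(\hat{\vbeta}^{s^\ast})-\ln L_n(\hat{\vbeta}^{s})\big]\ \ge\ c\,n\,\big\|\vbeta_0^{s_{0n}\setminus s}\big\|_2^2\ \ge\ c\,C^2\,n^{1/2},
\]
where the last inequality uses C2, a missing relevant coordinate contributing at least $C^2 n^{-1/2}$. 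The overfitting gain $2[\ln L_n(\hat{\vbeta}^{s^\ast})-\ln L_n(\hat{\vbeta}^{s_{0n}})]$ is $O_p(n^{b+\kappa})$ by the part~(2) estimate, and the entire penalty difference between $s$ and $s_{0n}$ is of order $O(n^b\ln n+n^{b+\kappa})=O(n^{b+\kappa})$. Since $b+\kappa<1/3<1/2$, the $n^{1/2}$ deficit dominates both, so $\mathrm{EBIC}_\gamma(s)>\mathrm{EBIC}_\gamma(s_{0n})$ uniformly over $\mathcal{A}_1$ with probability tending to one.

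For part~(2), fix $s\in\mathcal{A}_0$ and set $m=|s|-p_{0n}\ge1$. The quadratic expansion reduces $2[\ln L_n(\hat{\vbeta}^s)-\ln L_n(\hat{\vbeta}^{s_{0n}})]$ to an $m$-dimensional standardized score statistic in the new directions orthogonalized against $s_{0n}$, whose coordinates are asymptotically standard normal by C6. A union bound over the $\binom{p_n-p_{0n}}{m}\le p_n^{m}$ models of each overfitting size, together with a chi-square tail bound, yields with probability tending to one the uniform estimate $2[\ln L_n(\hat{\vbeta}^s)-\ln L_n(\hat{\vbeta}^{s_{0n}})]\le \frac{2}{1-\epsilon}\,m\ln p_n$. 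Since $\ln\tau(S_{|s|})-\ln\tau(S_{p_{0n}})=m\ln p_n(1+o(1))$,
\[
\mathrm{EBIC}_\gamma(s)-\mathrm{EBIC}_\gamma(s_{0n})\ \ge\ m\Big(\ln n+2\gamma\ln p_n-\tfrac{2}{1-\epsilon}\ln p_n\Big)(1+o(1)),
\]
which is positive exactly when $\gamma>\frac{1}{1-\epsilon}\big(1-\frac{\ln n}{2\ln p_n}\big)$, the stated threshold; summing the union-bound probabilities over $m=1,\dots,(k-1)p_{0n}$ keeps the total negligible.

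I expect the main obstacle to be the uniform validity of the quadratic expansion in the non-concave regime. Unlike the canonical case there is no global concavity to guarantee that $\hat{\vbeta}^s$ even exists or is unique, so every estimate must be confined to the local $\delta$-ball, and one must show simultaneously over all $O(\exp\{n^\kappa\})$ models both that the MLE does not escape this ball and that the Hessian there stays positive definite of order $n$. Assembling Lemma~\ref{lemma1} and C5--C6 into uniform statements whose error probabilities beat the union bound --- in particular checking that the $o(n^{-1/3})$ ratios in C6 furnish tail bounds strong enough to dominate $p_n^{m}=\exp\{m\,n^\kappa\}$ when the overfitting size $m$ may itself grow like $n^b$ --- is the delicate heart of the argument, and is what forces the exponent condition $b+\kappa<1/3$ in C1.
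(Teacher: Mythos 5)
Your two-case architecture matches the paper's, and your part (2) is essentially the paper's Case 2: localize $\hat{\vbeta}^s$ within $O_p(n^{-1/3})$ of $\vbeta_0^s$ uniformly over $\mathcal{A}_0$, reduce the likelihood gain to a standardized score quadratic form, and beat the $p_n^m$ union bound with a Bernstein/chi-square tail (your $m$-dimensional orthogonalized statistic is, if anything, tidier bookkeeping than the paper's $|s|$-dimensional form $\vs_n^{\tau}(\vbeta_0^s)\{H_{n1}(\vbeta_0^s)\}^{-1}\vs_n(\vbeta_0^s)$). The genuine gap is in part (1). Your key inequality
\[
2\big[\ln L_n(\hat{\vbeta}^{s^\ast})-\ln L_n(\hat{\vbeta}^{s})\big]\ \ge\ c\,n\,\big\|\vbeta_0^{s_{0n}\setminus s}\big\|_2^2
\]
is justified by a second-order expansion about $\hat{\vbeta}^{s^\ast}$ with $\lambda_{\min}(H_n)\ge c_1 n(1-\xi)$ at the mean-value point. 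But that point lies on the segment joining $\hat{\vbeta}^{s^\ast}$ to the zero-padded vector $\breve{\vbeta}^{s^\ast}$ built from $\hat{\vbeta}^{s}$, and $\hat{\vbeta}^{s}$ is the MLE of a \emph{misspecified} model: it concentrates around a pseudo-true parameter that in general sits at a fixed, order-one distance from $\vbeta_0$. Conditions C4--C5 and Lemma~\ref{lemma1} control the Hessian only inside a fixed $\delta$-ball around $\vbeta_0$; outside that ball, with a non-canonical link, $H_n$ need not even be positive semidefinite --- which is precisely the difficulty this paper exists to address. So the eigenvalue bound you invoke is unavailable exactly where your mean-value point lives; mere concavity (which the paper assumes only qualitatively, to guarantee existence and uniqueness of the MLE) gives $\ln L_n(\hat{\vbeta}^{s^\ast})-\ln L_n(\breve{\vbeta}^{s^\ast})\ge 0$ but no $n^{1/2}$ gap. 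You correctly identify "confinement to the $\delta$-ball" as the delicate issue in your closing paragraph, yet this step of yours violates it.

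The paper's Case 1 is engineered to avoid this trap. It never expands around $\hat{\vbeta}^{s^\ast}$: it compares with $\vbeta_0^{\tilde s}$ instead, using $\ln L_n(\hat{\vbeta}^{s_{0n}})\ge \ln L_n(\vbeta_0^{s_{0n}})$ (which also keeps Case 1 independent of Case 2, whereas your route imports the overfitting-gain estimate); it then uses concavity only qualitatively, to replace the supremum over $\{\|\vbeta^{\tilde s}-\vbeta_0^{\tilde s}\|_2\ge n^{-1/4}\}$ --- a region containing every $\breve{\vbeta}^{\tilde s}$ by C2 --- with the supremum over the sphere $\{\|\vbeta^{\tilde s}-\vbeta_0^{\tilde s}\|_2 = n^{-1/4}\}$; and only on that sphere, which lies well inside the $\delta$-ball, does it deploy the quantitative ingredients (C4, C5, Lemma~\ref{lemma1}, and $\max_j|s_{n,j}(\vbeta_0)|=O_p(n^{2/3})$) to obtain the uniform bound $n^{5/12}-c\,n^{1/2}\le -C n^{1/2}$, which dominates the $o(n^{1/2})$ penalty difference. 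Your argument can be repaired in the same spirit --- apply strong concavity only on the initial sub-segment that stays in the $\delta$-ball (which has length of order $n^{-1/4}$), and use monotonicity of the directional derivative, i.e.\ plain concavity, beyond it --- but as written the step fails, and it fails at the exact point where the non-canonical case differs from the canonical one.
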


The following result  are needed in the proof of Theorem~\ref{SelectionConsistency}.  

\begin{lemma}
\label{lemma1}
Under conditions C1 - C6,  whenever $\|\vbeta^{s\cup s_{0n}}-\vbeta_0^{s\cup s_{0n}}\|_2 \leq \delta,$
\[
\vu^{\tau}H_n\left(\vbeta(s\cup s_{0n})\right)\vu=\vu^{\tau}H_n^E\left(\vbeta(s\cup s_{0n})\right)\vu\left(1+o_p(1)\right), 
\]
uniformly in  $ s$ with $|s| \leq k p_{0n}$. 
\end{lemma}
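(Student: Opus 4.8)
The plan is to show that the random, possibly indefinite part $H_{n0}$ of the Hessian is uniformly negligible against the information part $H_{n1}$, which is precisely the assertion of the lemma once one reads $H_n^E=H_{n1}$, i.e. $H_n^E(\vbeta)$ is the matrix obtained by substituting the fitted mean $b^{'}(h(\vx_i^\tau\vbeta))$ for $y_i$, so that the $H_{n0}$ term vanishes. Concretely, the goal is
\[
\sup_{s:\,|s|\le kp_{0n}}\ \sup_{\vu\neq\vzero}\ \frac{|\vu^{\tau}H_{n0}(\vbeta^{s\cup s_{0n}})\vu|}{\vu^{\tau}H_{n1}(\vbeta^{s\cup s_{0n}})\vu}=o_p(1).
\]
The first step is to remove the dependence on the evaluation point. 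Since $\|\vbeta^{s\cup s_{0n}}-\vbeta_0^{s\cup s_{0n}}\|_2\le\delta$, condition C5 with $j=0,1$ sandwiches both quadratic forms, giving $|\vu^\tau H_{n0}(\vbeta)\vu|\le(1+\xi)|\vu^\tau H_{n0}(\vbeta_0)\vu|$ and $\vu^\tau H_{n1}(\vbeta)\vu\ge(1-\xi)\vu^\tau H_{n1}(\vbeta_0)\vu$, so the ratio at a generic $\vbeta$ is controlled by $\tfrac{1+\xi}{1-\xi}$ times the ratio at $\vbeta_0$. I stress that C5 is used only in quadratic-form sense, which is all the ratio requires. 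The payoff is that at $\vbeta_0$ the factor $y_i-b^{'}(h(\vx_i^\tau\vbeta_0))=y_i-\mu_i=:\epsilon_i$ is \emph{mean zero} (the model $s\cup s_{0n}$ contains $s_{0n}$), so $H_{n0}(\vbeta_0)$ is a centered random matrix.

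Next I would dispose of the denominator and convert the numerator into a scalar maximum. By C4 the denominator obeys $\vu^\tau H_{n1}(\vbeta_0)\vu\ge c_1 n\|\vu\|_2^2$, so it suffices to prove
\[
\sup_{s:\,|s|\le kp_{0n}}\Big\|\,\textstyle\sum_{i=1}^n\epsilon_i\,h^{''}(\vx_i^\tau\vbeta_0)\,\vx_i^s\vx_i^{s\tau}\,\Big\|_{\mathrm{op}}=o_p(n),
\]
noting $|s\cup s_{0n}|\le(k+1)p_{0n}=O(n^b)$. Rather than cover the unit sphere of each sparse subspace, I would bound the operator norm entrywise: for a symmetric $m\times m$ matrix $\|A\|_{\mathrm{op}}\le m\max_{j,k}|A_{jk}|$ (e.g. by the row-sum/Gershgorin bound), whence the left side is at most $(k+1)p_{0n}\max_{1\le j,k\le p_n}|\zeta_{jk}|$ with $\zeta_{jk}=\sum_i\epsilon_i h^{''}(\vx_i^\tau\vbeta_0)x_{ij}x_{ik}$. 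It then remains to show $\max_{j,k}|\zeta_{jk}|=o_p(n^{1-b})$.

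For this I would use concentration followed by a union bound. Each $\zeta_{jk}$ is a sum of independent mean-zero terms with variance $\sum_i\sigma_i^2[h^{''}(\vx_i^\tau\vbeta_0)]^2x_{ij}^2x_{ik}^2=O(n)$, by the boundedness of $|x_{ij}|,|h^{''}|,\sigma_i^2$ in C6. Because C3 guarantees that the moment generating function of $y_i$ is finite in a neighborhood of the true natural parameter, the $\epsilon_i$ are uniformly sub-exponential, so Bernstein's inequality gives $P(|\zeta_{jk}|>t)\le 2\exp(-c\min\{t^2/n,\,t\})$. A union bound over the $p_n^2$ entries, together with $\ln p_n=O(n^\kappa)$ from C1, yields $\max_{j,k}|\zeta_{jk}|=O_p(\sqrt{n\ln p_n})=O_p(n^{(1+\kappa)/2})$; multiplying by $p_{0n}=O(n^b)$ produces $O_p(n^{b+(1+\kappa)/2})=o(n)$ precisely because C1 forces $2b+\kappa<1$ (the stronger budget $b+\kappa<1/3$ leaves ample room). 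This delivers the displayed operator-norm bound, hence the lemma.

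The main obstacle is the uniformity over the ultra-high-dimensional model collection: with $p_n=O(\exp\{n^\kappa\})$ there are exponentially many entries (and subspaces), so second-moment/Chebyshev control is hopeless and one genuinely needs an exponential tail. That tail is available only because C3 supplies a finite MGF and C6 supplies the boundedness that simultaneously keeps every entry's variance at $O(n)$ and its Bernstein ``max-term'' at $O(1)$; the delicate bookkeeping is to confirm that the per-entry exponent ($\asymp n^{1-b}$ at deviation level of order $n$) still dominates the $\ln(p_n^2)=O(n^\kappa)$ cost of the union after paying the $p_{0n}$ factor incurred by the entrywise operator-norm estimate, which is exactly what the exponent budget in C1 secures.
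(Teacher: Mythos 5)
Your proposal is correct, and its outer frame coincides with the paper's: interpret $H_n^E$ as $H_{n1}$, use C5 to pass from a generic $\vbeta^{s\cup s_{0n}}$ in the $\delta$-ball to $\vbeta_0$, use C4 for the lower bound $\vu^{\tau}H_{n1}\vu\geq c_1n\|\vu\|_2^2$, and reduce the lemma to showing $\sup_{|s|\leq kp_{0n}}\|H_{n0}(\vbeta_0^{s\cup s_{0n}})\|_{\mathrm{op}}=o_p(n)$. Where you genuinely diverge is the concentration step. The paper bounds $(\vu^{\tau}\vx_i^{\tilde{s}})^2\leq\|\vx_i^{\tilde{s}}\|_2^2\leq C(k+1)p_{0n}$, reducing everything to the single, model-independent scalar sum $\sum_i|(y_i-\mu_i)h^{''}(\vx_i^{\tau}\vbeta_0)|$, and invokes Lemma 1 of Chen and Chen (2012) with weights proportional to $h^{''}(\vx_i^{\tau}\vbeta_0)\sign(y_i-\mu_i)$ to conclude that this sum is $O_p(n^{2/3})$, giving the bound $o_p(p_{0n}n^{2/3})=o_p(n)$ since $p_{0n}=o(n^{1/3})$. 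You instead use the Gershgorin/row-sum bound, which converts uniformity over the exponentially many models into a union bound over only $p_n^2$ coordinate pairs, and apply Bernstein's inequality to the sums $\zeta_{jk}=\sum_i\epsilon_ih^{''}(\vx_i^{\tau}\vbeta_0)x_{ij}x_{ik}$, obtaining $O_p(p_{0n}\sqrt{n\ln p_n})=o_p(n)$ under $2b+\kappa<1$. Your route buys two things: a sharper rate, since $\sqrt{n\ln p_n}=O(n^{(1+\kappa)/2})=o(n^{2/3})$ under C1; and, more substantively, sounder footing, because your $\zeta_{jk}$ have deterministic weights and are exactly mean zero, so an exponential inequality applies legitimately, whereas the paper's absolute-value sum is not centered (its mean is generically of order $\sum_i|h^{''}(\vx_i^{\tau}\vbeta_0)|$, typically of order $n$, since $E|y_i-\mu_i|$ is bounded below once $\sigma_i^2$ is bounded below and fourth moments are bounded) and its random-sign weights fall outside the scope of the deterministic-weight lemma being cited; your argument sidesteps this weak point of the paper's own proof entirely. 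The one assertion you share with the paper rather than improve upon is that C3 and C6 yield sub-exponential tails for $\epsilon_i$ uniformly in $i$ and $n$, which requires the distance from each natural parameter $h(\vx_i^{\tau}\vbeta_0)$ to the boundary of the natural parameter space to stay bounded away from zero; this is exactly the ingredient the paper needs in order to invoke Chen and Chen's lemma, so it is a shared hypothesis, not an additional gap in your argument.
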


The above lemma imply the following result that gives the convergence rate of the $L_2$-consistency of the MLE of $\vbeta^s$ when $s_{0n} \subset s$. The result is of its own interest.
\begin{theorem}
\label{UnifRate} Under conditions C1 - C6,
as $n\rightarrow \infty$,
$\|\hat{\vbeta^s}-\vbeta_0^s\|_2=O_p(n^{-1/3}),$ uniformly for $s \in
{\cal A}_0.$
\end{theorem}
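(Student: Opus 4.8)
The plan is to establish the uniform rate through the standard device of showing that, with probability tending to one, the log-likelihood $l_n(\vbeta^s)$ is strictly smaller on the sphere $\{\vbeta^s : \|\vbeta^s-\vbeta_0^s\|_2 = Cn^{-1/3}\}$ than at its centre $\vbeta_0^s$, uniformly over $s\in\mathcal{A}_0$, for a sufficiently large constant $C$. Since the MLE exists by assumption and, by Lemma \ref{lemma1}, $l_n$ is locally concave on the $\delta$-neighbourhood of $\vbeta_0^s$, a decrease across the sphere pins the maximiser $\hat{\vbeta^s}$ strictly inside the ball of radius $Cn^{-1/3}$, which is exactly the claim. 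Concretely I would write $\vbeta^s=\vbeta_0^s+r_n\vu$ with $r_n=Cn^{-1/3}$ and $\|\vu\|_2=1$, and Taylor-expand
\[ l_n(\vbeta_0^s+r_n\vu)-l_n(\vbeta_0^s)=r_n\vu^\tau\vs_n(\vbeta_0^s)-\tfrac{1}{2}r_n^2\vu^\tau H_n(\tilde{\vbeta}^s)\vu, \]
with $\tilde{\vbeta}^s$ on the segment joining $\vbeta_0^s$ and $\vbeta^s$. It then suffices to show the quadratic term dominates the linear one, uniformly in $\vu$ and $s$.

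For the quadratic (Hessian) term I would invoke Lemma \ref{lemma1} to replace $H_n(\tilde{\vbeta}^s)$ by its expected counterpart, so that $\vu^\tau H_n(\tilde{\vbeta}^s)\vu=\vu^\tau H_n^E(\tilde{\vbeta}^s)\vu\,(1+o_p(1))=\vu^\tau H_{n1}(\tilde{\vbeta}^s)\vu\,(1+o_p(1))$ uniformly over $s$, the equality $H_n^E=H_{n1}$ holding because $H_{n0}$ is mean-zero. As $\|\tilde{\vbeta}^s-\vbeta_0^s\|_2\le r_n<\delta$ for large $n$, condition C5 gives $\vu^\tau H_{n1}(\tilde{\vbeta}^s)\vu\ge(1-\xi)\vu^\tau H_{n1}(\vbeta_0^s)\vu$, and C4 bounds this below by $(1-\xi)c_1 n$. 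Hence the quadratic term is at least of order $\tfrac{1}{2}(1-\xi)c_1C^2 n^{-2/3}n=\tfrac{1}{2}(1-\xi)c_1C^2 n^{1/3}$, uniformly.

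The linear (score) term is where the real work lies, since it must be controlled uniformly over the vast collection of models in $\mathcal{A}_0$. Because $s\supset s_{0n}$, the score $\vs_n(\vbeta_0^s)$ is a sum of independent mean-zero summands with $\var(\vu^\tau\vs_n(\vbeta_0^s))=\vu^\tau H_{n1}(\vbeta_0^s)\vu=O(n)$; the finiteness of the moment generating function near $\vbeta_0^s$ supplied by C3, together with the boundedness of $|x_{ij}|$ and $|h'(\vx_i^\tau\vbeta_0)|$ in C6, renders each summand sub-exponential, so a Bernstein-type inequality yields $P(|\vu^\tau\vs_n(\vbeta_0^s)|>t)\le 2\exp(-ct^2/n)$ in the relevant range of $t$. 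I would then take a $\tfrac12$-net of the unit sphere in dimension $|s|\le kp_{0n}$, of cardinality at most $5^{kp_{0n}}$, and union-bound over the at most $\exp\{kp_{0n}\ln p_n\}$ models in $\mathcal{A}_0$, so that the bad event has probability at most $\exp\{-ct^2/n+C'p_{0n}\ln p_n\}$; choosing $t\asymp\sqrt{n\,p_{0n}\ln p_n}$ with a large enough leading constant forces this to zero and delivers $\sup_{s\in\mathcal{A}_0}\|\vs_n(\vbeta_0^s)\|_2=O_p(\sqrt{n\,p_{0n}\ln p_n})$.

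Combining the two estimates, the linear term is at most $r_n\sup_{s}\|\vs_n(\vbeta_0^s)\|_2=O_p(C\,n^{-1/3}\sqrt{n\,p_{0n}\ln p_n})$, which is of order $n^{(1+b+\kappa)/2-1/3}$ after substituting $p_{0n}=O(n^b)$ and $\ln p_n=O(n^\kappa)$. Condition C1 forces $b+\kappa<1/3$, so this exponent is strictly below the exponent $1/3$ of the quadratic term; thus for $C$ large the quadratic term dominates and the displayed difference is negative on the entire sphere, uniformly over $\mathcal{A}_0$, with probability tending to one, completing the argument. I expect the principal obstacle to be precisely the third step: securing a sub-exponential tail for the score sharp enough that the net-plus-union-bound over so many models survives, and it is exactly the budget $b+\kappa<1/3$ in C1 that makes this calculation close and simultaneously produces the $n^{-1/3}$ rate.
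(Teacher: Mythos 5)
Your proposal is correct, and its skeleton is the same as the paper's: the paper proves this theorem as the claim (\ref{inequ4}) inside Case 2 of the proof of Theorem \ref{SelectionConsistency}, by Taylor expansion on a sphere of radius $n^{-1/3}$ around $\vbeta_0^s$, a Hessian lower bound of order $n^{1/3}$ from Lemma \ref{lemma1} together with C4--C5, an exponential tail bound on the score, and the concavity gloss to pin $\hat{\vbeta}^s$ inside the ball. Where you genuinely differ is in the one delicate step, uniform control of the linear (score) term over the exponentially many models in $\mathcal{A}_0$. The paper never takes a net: it exploits the fact that for $s\supset s_{0n}$ the score $\vs_n(\vbeta_0^s)$ is simply a sub-vector of the full-model score $\vs_n(\vbeta_0)$ (since $\vx_i^{s\tau}\vbeta_0^s=\vx_i^\tau\vbeta_0$), so a single coordinatewise union bound over the $p_n$ components --- its claim (\ref{claim}), $\max_{1\le j\le p_n}|s_{nj}(\vbeta_0)|=O_p(n^{2/3})$, costing only $\ln p_n=o(n^{1/3})$ in the exponent --- controls all models at once. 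You instead use Bernstein plus a $\tfrac{1}{2}$-net of the unit sphere plus a union bound over models, paying $O(p_{0n}\ln p_n)$ in the exponent and getting $\sup_{s\in\mathcal{A}_0}\|\vs_n(\vbeta_0^s)\|_2=O_p(\sqrt{n\,p_{0n}\ln p_n})$. Each route buys something. The paper's reduction is cheaper and needs no entropy argument, but as written it is loose exactly where you are careful: it bounds $P(\sup_{\|\vu\|_2=1}\vu^\tau\vs_n(\vbeta_0^s)\ge cn^{2/3})$ by coordinatewise tails at threshold $cn^{2/3}$, silently dropping the $\sqrt{|s|}$ factor between $\max_j|s_{nj}|$ and $\|\vs_n\|_2$ (the argument still closes with the corrected threshold $cn^{2/3}/\sqrt{|s|}$ because $p_{0n}\ln p_n=o(n^{1/3})$, but the paper does not say so). Your net bound handles the supremum honestly and is in fact sharper --- $n^{(1+b+\kappa)/2}$ versus the $n^{b/2+2/3}$ implicit in the paper, smaller whenever $\kappa<1/3$, hence always under C1 --- at the price of one detail you should make explicit: in $\vu^\tau\vs_n(\vbeta_0^s)=\sum_i(y_i-\mu_i)h^{'}(\vx_i^\tau\vbeta_0)\,\vu^\tau\vx_i^s$ the weights are only $O(\sqrt{p_{0n}})$, not $O(1)$, so the sub-Gaussian regime of Bernstein holds only for $t\lesssim n/\sqrt{p_{0n}}$; your choice $t\asymp\sqrt{n\,p_{0n}\ln p_n}$ lies in that range because $2b+\kappa<1$ under C1, so the calculation survives. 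Finally, be aware that your last step inherits the paper's own imprecision: local concavity on the $\delta$-ball plus a decrease across the $Cn^{-1/3}$-sphere strictly yields a local maximiser inside the ball; identifying it with the (assumed unique) global MLE is asserted rather than argued both in your proposal and in the paper.
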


The technical details of the proof for the above results are given in the Appendix.
Theorem 1 implies that if we confine to the models with cardinality less than or equal to $kp_{0n}$ and  select the model with the smallest EBIC among all those models then, with probability converging to 1, the selected model, say, $s_n^*$, will be the same as the true model $s_{0n}$.  This property is what is called selection consistency.  The constraint that $|s| \leq k p_{0n}$ is natural since we do not need to consider any models with cardinality much larger than that of the true model in practical problems.  However, in practice, the evaluation of all models with cardinality up to $k p_{0n}$ is computationally impossible.  Like any other model selection criteria,  the EBIC is to be used in a certain model selection procedure.  In addition to the traditional forward selection procedures, a variety of procedures based on penalized likelihood approach have been developed within the last twenty years such as the LASSO (Tibishirani, 1996), SCAD (Fan and Li, 2001), Elastic Net (Zou and Hastie, 2005), and so on.  A model selection criterion can be used in these procedures to choose the penalty parameter, which corresponds to choosing a model. However, though some desirable properties such as the so-called oracle property have been established for these penalized likelihood approaches under certain conditions,  the asymptotic properties of these approaches with GLIM and ultra-high dimensional feature space have not been throughly studied yet to our knowledge. The traditional forward selection methods have been criticized for its greedy nature. But, recently, it is discovered that the greedy nature might not be bad especially when the model selection is for the selection of relevant variables rather than for a prediction model, see, e.g.,  Tropp (2004), Tropp and Gilbert (2007) and Wang (2009).  In this article, we consider the application of the EBIC with the traditional forward regression procedure for GLIM in our simulation studies and real data analysis.

\section{Simulation Study}

In our simulation studies,  we consider a GLIM with binary response and the complementary log-log link.  We take the divergent  pattern 
 $(n,p_n,p_{0n})=(n,[40 e^{n^{0.2}}],[5n^{0.1}])$ for $n=100,200,500$. The settings for the covariates, which  are adapted from Fan and Song (2010),  are described below.

\begin{description}
\item[Setting 1.]  Let $q=15$, $s_1 =\{1, \dots, q\}$, $s_2 = \left\{q+1, \dots, \left[ \frac{p_n}{3}\right] \right\}$,   $s_3 = \left\{\left[ \frac{p_n}{3}\right] +1, \dots, \left[ \frac{2p_n}{3}\right] \right\}$ and $s_4 = \left\{\left[ \frac{2p_n}{3}\right] +1, \dots, p_n\right\}$.   Let the covariate vector $\vx$ be decomposed into  $\vx = (\vx^{s_1}$, $\vx^{s_2}$, $\vx^{s_3}, \vx^{s_4})$.  Assume that $\vx^{s_1}$ follows  $N(\vzero,\Sigma_{\rho})$, where  $\Sigma_{\rho}$ has diagonal elements  1 and off-diagonal elements $\rho$,  $\vx^{s_2}$ follows $N(\vzero, I)$, the components of  $\vx^{s_3}$ are i.i.d. as a double exponential distribution with location 0 and scale 1,  the components of $\vx^{s_4}$ are i.i.d. with the normal mixture $\frac{1}{2} [ N(-1,1)+ N(1,0.5)]$.  The covariates $\vx_i^{s_k}, i=1, \dots, n$, are generated as i.i.d. copies of $\vx^{s_k}$, $k=1,2,3,4.$  Four values of $\rho$: 0, 0.3, 0.5 and 0.7, are considered. $s_{0n} = \{L\times t, t=1, \dots, p_{0n}\}$, where $L=10$.  $\beta_j = 1$,  if $j=L\times t$ with odd $t$, 1.3, if $j=L\times t$ with even $t$, 0, otherwise.

\item[Setting 2.] The same as setting 1 except $L=5$.  The essential difference between setting 1 and this setting  is that, in setting 1, all the relevant features are independent while, in this setting, three of them have pairwise correlation $\rho$. Two values of $\rho$:  0.3 and 0.5, are considered in this setting.

\item[Setting 3.]  $L=10, q=50$. In all the settings for $(n, p_n, p_{0n})$,  this $q$ is much smaller than $p_n$ and $p_n-q$ is much bigger than $L p_{0n}$.  The distribution of the covariate vector $\vx$ is specified as follows.  For $j=1, \dots, p_n-q$, the components $x_j$'s are i.i.d. standard normal variables. For $p_n-q < j \leq p_n$,  
\[ x_j =\frac{1}{5} \left[ \sum_{t=1}^{p_{0n}}(-1)^{t+1}x_{Lt}+\sqrt{25-p_{0n}} \xi_j\right],
\]
where the $\xi_j$'s  are i.i.d. standard normal variables.  $\vx_i$'s are generated as i.i.d. copies of $\vx$.  The specification for $s_{0n}$ and $\vbeta$ is the same as in setting 1.      In this setting, all the relevant features are independent, the last $q$ irrelevant features, which are highly pairwise correlated, have a weak marginal correlation with each of the relevant features but a strong overall correlation with the totality of the relevant features.
\end{description}

We apply the forward selection procedure with EBIC in the simulation studies.  In more detail, the procedure starts by fitting the GLIMs with one covariate, the covariate corresponding the model with the smallest EBIC is the first selected variable. Then GLIMs with two covariates including the first selected variable are considered, the additional covariate corresponding to the two-covariate model with the smallest EBIC is the second selected variable. The procedure continues this way and at each step, one more covariate is selected.  To reduce the amount of computation, when $p_n$ is bigger than 1000,  the sure independence screening procedure based on the maximum marginal estimator (MME) (Fan and Song (2010)) is used to reduce the dimension of the feature to 400 before the forward selection procedure is invoked. We consider four $\gamma$ values in EBIC, i.e., $\gamma_1 = 0, \gamma_2 = \frac{1}{2}( 1-\frac{\ln n}{2\ln p_n}), \gamma_3 = 1-\frac{\ln n}{4\ln p_n}$ and $\gamma_4 = 1$.   We choose these values because $\gamma_1$ corresponds to the original BIC, $\gamma_4$ corresponds to mBIC,  $\gamma_2$ is halfway between 0 and $ 1-\frac{ \ln n}{ 2 \ln p_n}$, the lower bound of the consistent range of $\gamma$,  and $\gamma_3$ is halfway between $ 1-\frac{ \ln n}{ 2 \ln p_n}$ and 1.  Thus we can evaluate the asymptotic behavior of EBIC when the $\gamma$ value is below and above the lower bound of the consistent range and also make a comparison with BIC and mBIC.  The performance of the procedure is evaluated by positive discovery rate (PDR) and false discovery rate (FDR).
The PDR and FDR are defined as follows:  
\[ \mbox{PDR}_n =\frac{ \nu(s^{*} \cap s_{0n})}{\nu(s_{0n})}, \ \ \mbox{FDR}_n =\frac{\nu(s^{*} \backslash s_{0n})}{\nu(s^*)},\]
where $s^*$ is the set of selected features.  The  selection consistency is equivalent to 
\[ \lim_{n\to \infty }  \mbox{PDR}_n  = 1  \ \ \mbox{and} \ \ \lim_{n\to \infty }  \mbox{FDR}_n  = 0, \]
in probability.    The PDR and FDR are averaged over 200 replications. The results under Settings 1-3 are reported in Tables 1- 3 respectively.

By examining Tables 1 - 3, we can find the following common trends: 1) with all the four $\gamma$ values, the PDR increases as $n$ gets larger, 2) with $\gamma_1$ and $\gamma_2$ (which are below the lower bound of the consistent range), the FDR does not show a trend to decrease while, with  $\gamma_3$ and $\gamma_4$ (which are within the consistent range), the FDR reduces rapidly towards zero, 3) though the PDRs with $\gamma_3$ and $\gamma_4$ are lower than those with $\gamma_1$ and $\gamma_2$ when sample size is small, but they become comparable as the sample size increases, and 4) the FDR with $\gamma_4$ is lower than that with $\gamma_3$ when sample size is small, however, the PDR is also lower, as sample size gets larger, both the PDR and FDR with $\gamma_3$ and those with $\gamma_4$ become comparable.  These findings demonstrate that the selection consistency of EBIC is well realized in finite sample case.

\section{Real Data Analysis}
\label{real}
In this section, we apply the forward selection procedure with EBIC to analyze a Leukemia data set. The data consists of the expression levels of 7129 genes obtained from  47 patients with acute lymphoblastic leukemia (ALL) and 25 with acute myeloid leukemia (AML). The data set is available in the R packages {\it Biobase} and {\it golubEsets}.  The initial version of this data set is described and analyzed by a method called ``neighborhood analysis" in Golub et al. (1999). The data set is later analyzed using GLIM with probit link in  Lee et al. (2003) and using GLIM with logit link in Liao and Chin (2007). 50 genes are identified as important ones affecting the types of leukemia in Golub et al. (1999), 27 genes are identified in Lee et al. (2003), and 19 genes are identified in Liao and Chin (2007).  There are only a few overlapped genes among the three identified sets.  

We analyzed the data by the forward selection procedure with four different link functions: {\it logit,  probit, cauchit} and {\it cloglog}. 
First, with each link function, the procedure was carried out until 50 genes were selected. The identified genes are reported in Table~\ref{gene1}.   These 50 genes are compared with three identified sets mentioned above.   Those which were identified in Golub et al. (1999), Lee et al. (2003) and Liao and Chin (2007) are indicated by $\star$,  $\triangle$ and  $\ast$  respectively.  There are three genes: 1834,1882, 6855, which are in all the three identified sets are selected by the forward selection procedure.  They are all among the selected genes with logit and cloglog links.  Two of them, i.e., 1834, 1882, are only among the selected genes with probit and cauchit links.  The other selected genes except two of them are in only one of the identified sets.  Note that the selected genes and their ordering are different among the four different links.  This indicates that the link function does  matter in  the selection procedure.  
Second,  we  used 8-fold cross validation to select the optimal link function among the four links. The optimal link is the logit link. Finally,  we made a final selection using EBIC with  $\gamma= 1-\frac{\ln n}{3\ln p_n})$ which is slightly bigger than the lower bound of the consistent range.  The final selected  variables together with the maximum log likelihood of the corresponding model are reported in Table~\ref{gene2}.  To compare the final selection of the logit link with the other links, the selected results with all the four links are reported. The genes selected by the logit link are 1834 and 4438.  The maximum log likelihood of the selected model with the logit link is the largest among all the four links.  Note that, the same two genes are also selected by probit link and the  gene 4438  is selected by  cloglog link.  We thus can conclude quite confidently that the two genes selected by logit link are the most important genes for studying the etiology of leukemia.

\newpage
\section{Appendix:  Technical Proofs}  

\begin{proof}[Proof of Lemma \ref{lemma1}] For any arbitrary $s\in \mathcal{A}_1,$ consider $\tilde{s}=s\cup s_{0n}.$ Let $a_{ni}$ in Lemma 1 of Chen and Chen (2012) be $h^{''}\left(\vx_i^{\tilde{s}\tau}\vbeta_0^{\tilde{s}}\right)\sign(y_i-\mu_i)/\sqrt{\sum\limits_{i=1}^n\sigma_i^2 \left(h^{''}\left(\vx_i^{\tilde{s}\tau}\vbeta_0^{\tilde{s}}\right)\right)^2},$ since $\vx_i^{\tilde{s}\tau}\vbeta_0^{\tilde{s}}=\vx_i^{\tau}\vbeta_0,$ from Condition C6, we have
\begin{equation}
\label{lemma1ine1}
P\left(\sum\limits_{i=1}^n|(y_i-\mu_i)h^{''}\left(\vx_i^{\tau}\vbeta_0\right)|\geq Cn^{2/3}\right)\leq 2\exp(-Cn^{1/3}).
\end{equation}
For any unit vector $\vu$ with length $|\tilde{s}|,$
\begin{equation}
\begin{split}
\label{lemma1ine2}
\vu^{\tau}H_{n0}(\vbeta_0^{\tilde{s}})\vu=&\sum\limits_{i=1}^n(y_i-\mu_i)h^{''}\left(\vx_i^{\tau}\vbeta_0\right)\left(\vu^{\tau}\vx_i^{\tilde{s}}\right)^2\\
\leq &\sum\limits_{i=1}^n|(y_i-\mu_i)h^{''}\left(\vx_i^{\tau}\vbeta_0\right)|\|\vx_i^{\tilde{s}}\|_2^2\\
\leq & C(k+1)p_{0n}\sum\limits_{i=1}^n|(y_i-\mu_i)h^{''}\left(\vx_i^{\tau}\vbeta_0\right)|.
\end{split}
\end{equation}
The last inequality is true because all $x_{i,j}'s$ are bounded, as assumed in Condition C6. (\ref{lemma1ine1}) and (\ref{lemma1ine2}) with Condition C5 imply that, for any $\xi>0$, there exists a $\delta>0$ such that
\begin{equation*}
\begin{split}
&P\left(\max_{s\in \mathcal{A}_1,\|\vu\|_2=1,\|\vbeta^{s\cup s_{0n}}-\vbeta_0^{s\cup s_{0n}}\|_2\leq \delta}\vu^{\tau}H_{n0}\left(\vbeta^{s\cup s_{0n}}\right)\vu\geq Cp_{0n}n^{2/3}\right)\\
\leq &P\left(\max_{s\in \mathcal{A}_1,\|\vu\|_2=1}\vu^{\tau}H_{n0}\left(\vbeta_0^{s\cup s_{0n}}\right)\vu\geq \dfrac{C}{1+\xi}p_{0n}n^{2/3}\right)\\
\leq &|\mathcal{A}_1|P(\sum\limits_{i=1}^n|(y_i-\mu_i)h^{''}\left(\vx_i^{\tau}\vbeta_0\right)|\geq \tilde{C}n^{2/3})\\
\leq &2\exp(kp_{0n}\ln p_n-\dfrac{C}{1+\xi}n^{1/3})=o(1).
\end{split}
\end{equation*}
Similar strategy applies to $s\in \mathcal{A}_0$ since
$\mathcal{A}_0=\{s\cup s_{0n}:s\in\mathcal{A}_1,0<|s|\leq (k-1)p_{0n}\}.$
That is, $\max\limits_{|s|\leq kp_{0n},\|\vu\|_2=1,\|\vbeta^{s\cup s_{0n}}-\vbeta_0^{s\cup s_{0n}}\|_2\leq \delta}\vu^{\tau}H_{n0}\left(\vbeta^{s\cup s_{0n}}\right)\vu=o_p(p_{0n}n^{2/3}).$ Combined with $p_{0n}=o(n^{1/3})$ in Condition C1 and Condition C4, we can have the desired result.

\end{proof}

\begin{proof}[Proof of Theorem \ref{SelectionConsistency}]
According to the definition of EBIC, for any model $s$,
 $\mbox{EBIC}_{\gamma}(s)\leq \mbox{EBIC}_{\gamma}(s_{0n})$ if and only if
 \begin{equation}
 \label{compare}
\ln L_n\left(\hat{\vbeta}^s\right)-\ln L_n\left(\hat{\vbeta}^{s_{0n}}\right)\geq (|s|-p_{0n})\ln n/2+\gamma\left(\ln \tau(S_{|s|})-\ln \tau(S_{p_{0n}})\right).
\end{equation}
To prove the selection consistency of EBIC, or mathematically, 
\[ 
P\left(\min_{s:|s|\leq kp_{0n},s\neq s_{0n}}\mbox{EBIC}_{\gamma}(s)\leq \mbox{EBIC}_{\gamma}(s_{0n})\right)\to 0\;\text{as}\;n\to +\infty,
\]
it suffices to show that inequality (\ref{compare}) holds with a probability converging to 0 as the sample size goes to infinity uniformly for all $s\in \mathcal{A}_0\cup \mathcal{A}_1$. This is completed by dealing with $s\in \mathcal{A}_0$ and $\mathcal{A}_1$ separately.
\vspace{0.15in}

(I)  {\it Case 1: $s\in \mathcal{A}_1$}. In this case,  inequality  (\ref{compare}) implies that
\begin{equation}
\label{compare1}
\ln L_n\left(\hat{\vbeta}^s\right)-\ln L_n\left(\hat{\vbeta}^{s_{0n}}\right)\geq -p_{0n}(\ln n/2+\gamma \ln p_n).
\end{equation}
Therefore, if we can show 
\begin{equation}
\label{compare1res}
P\left(\sup_{s\in \mathcal{A}_1}\ln L_n\left(\hat{\vbeta}^s\right)-\ln L_n\left(\hat{\vbeta}^{s_{0n}}\right)\geq -p_{0n}(\ln n/2+\gamma \ln p_n)\right)\to 0\;\text{as}\;n\to +\infty,
\end{equation}
then we will have
\[
P\left(\min_{s:s\in \mathcal{A}_1}\mbox{EBIC}_{\gamma}(s)\leq \mbox{EBIC}_{\gamma}(s_{0n})\right)\to 0\;\text{as}\;n\to +\infty.
\]
 
The key becomes to derive the order for $\sup_{s\in \mathcal{A}_1}\ln L_n\left(\hat{\vbeta}^s\right)-\ln L_n\left(\hat{\vbeta}^{s_{0n}}\right)$. 
For any $s\in \mathcal{A}_1,$ let $\tilde{s}=s\cup s_{0n}$ and $\breve{\vbeta}^{\tilde{s}}$ be
$\hat{\vbeta}^s$ augmented with zeros corresponding to the elements
in $\tilde{s}\backslash s.$ It can be seen that 
\[
\ln L_n\left(\vbeta_0^{\tilde{s}}\right)=\ln L_n\left(\vbeta_0^{s_{0n}}\right)\leq \ln L_n\left(\hat{\vbeta}^{s_{0n}}\right),\;\ln L_n\left(\hat{\vbeta}^s\right)=\ln L_n\left(\breve{\vbeta}^{\tilde{s}}\right),
\]
which leads to
\begin{equation}
\label{the1ine1}
\sup_{s\in \mathcal{A}_1}\ln L_n\left(\hat{\vbeta}^s\right)-\ln L_n\left(\hat{\vbeta}^{s_{0n}}\right)\leq \sup_{s\in \mathcal{A}_1}\ln L_n\left(\breve{\vbeta}^{\tilde{s}}\right)-\ln L_n\left(\vbeta_0^{\tilde{s}}\right).
\end{equation}
And also
$$\|\breve{\vbeta}^{\tilde{s}}-\vbeta_0^{\tilde{s}}\|_2\geq
\|\vbeta^{s_{0n} \backslash s}\|_2>\min_{j\in s_{0n}}\{|\vbeta_{j}|\}>Cn^{-1/4}.$$
The positive definiteness of $H_n(\vbeta)$, or the concavity of
 $\ln L_n(\vbeta^{\tilde{s}})$ in
$\vbeta^{\tilde{s}}$ implies
\begin{equation}
\label{inequ2}
\begin{split}
&\sup_{s\in \mathcal{A}_1}\ln L_n\left(\breve{\vbeta}^{\tilde{s}}\right)-\ln L_n\left(\vbeta_0^{\tilde{s}}\right)\\
\leq &\sup\{\ln L_n\left(\vbeta^{\tilde{s}}\right)-\ln L_n\left(\vbeta_0^{\tilde{s}}\right):\|\vbeta^{\tilde{s}}-\vbeta_0^{\tilde{s}}\|_2\geq
n^{-1/4},s\in \mathcal{A}_1\}\\
\leq &
\sup\{\ln L_n\left(\vbeta^{\tilde{s}}\right)-\ln L_n\left(\vbeta_0^{\tilde{s}}\right):\|\vbeta^{\tilde{s}}-\vbeta_0^{\tilde{s}}\|_2=
n^{-1/4}, s\in \mathcal{A}_1\}.
\end{split}
\end{equation}

To derive the order of the right hand side in the above inequality, we take the Taylor
Expansion of $\ln L_n\left(\vbeta^{\tilde{s}}\right)-\ln L_n\left(\vbeta_0^{\tilde{s}}\right)$ as follows:
\begin{equation}
\label{equation9}
\begin{split}
&\ln L_n\left(\vbeta^{\tilde{s}}\right)-\ln L_n\left(\vbeta_0^{\tilde{s}}\right)\\
=&\left(\vbeta^{\tilde{s}}-\vbeta_0^{\tilde{s}}\right)^{\tau}s_n\left(\vbeta_0^{\tilde{s}}\right)-\dfrac{1}{2}\left(\vbeta^{\tilde{s}}-\vbeta_0^{\tilde{s}}\right)^{\tau}H_{n1}\left(\vbeta^{\star\tilde{s}}\right)\left(\vbeta^{\tilde{s}}-\vbeta_0^{\tilde{s}}\right)\\
+&\dfrac{1}{2}\left(\vbeta^{\tilde{s}}-\vbeta_0^{\tilde{s}}\right)^{\tau}H_{n0}\left(\vbeta^{\star\tilde{s}}\right)\left(\vbeta^{\tilde{s}}-\vbeta_0^{\tilde{s}}\right)
\end{split}
\end{equation}
where $\vbeta^{\star\tilde{s}}$ is between $\vbeta^{\tilde{s}}$ and
$\vbeta_0^{\tilde{s}}.$ By condition C4 and C5, 
$$\left(\vbeta^{\tilde{s}}-\vbeta_0^{\tilde{s}}\right)^{\tau}H_{n1}\left(\vbeta^{\star\tilde{s}}\right)\left(\vbeta^{\tilde{s}}-\vbeta_0^{\tilde{s}}\right)\geq c_1n(1-\xi)\|\vbeta^{\tilde{s}}-\vbeta_0^{\tilde{s}}\|_2^2.$$
Lemma \ref{lemma1} implies that, for any $\vbeta^{\tilde{s}}$ such that
$\|\vbeta^s-\vbeta^{s_{0n}}\|_2=n^{-1/4},$ uniformly, there exists $0<c<c_1$ such that, with probability tending to $1$ as $n$ goes to $+\infty,$ 
\begin{equation}
\label{inequ1}
\ln L_n\left(\vbeta^{\tilde{s}}\right)-\ln L_n\left(\vbeta_0^{\tilde{s}}\right)\leq n^{-1/4}\|s_n(\vbeta_0^{\tilde{s}})\|_{+\infty}-\dfrac{c}{2}n^{1/2}(1-\xi).
\end{equation}

Now we need to find out the uniform rate for the components in the score function $s_n(\vbeta_0)$. We claim that under C1-C6,
\begin{equation}
\label{claim}
P\left(\max_{1\leq j\leq p_n}s_{n,j}^2\left(\vbeta_0\right)\geq Cn^{4/3}\right)=o(1).
\end{equation}
This claim can be seen from Lemma 1 in Chen and Chen (2012). For a fixed $j$, let $a_{ni}=x_{i,j}h^{'}\left(\vx_i^{\tau}\vbeta_0\right)/\sqrt{\sum\limits_{i=1}^n\sigma_i^2 x_{i,j}^2\left(h^{'}\left(\vx_i^{\tau}\vbeta_0\right)\right)^2}.$ From Condition C6, we have
\begin{equation*}
\begin{split}
P\left(s_{nj}\left(\vbeta_0\right)\geq Cn^{2/3}\right)
=&P\left(\sum\limits_{i=1}^na_{ni}(y_i-\vmu_i)>Cn^{2/3}/\sqrt{\sum\limits_{i=1}^n\sigma_i^2 x_{i,j}^2\left(h^{'}(\vx_i^{\tau}\vbeta_0)\right)^2}\right)\\
\leq & P\left(\sum\limits_{i=1}^na_{ni}(y_i-\vmu_i)>Cn^{1/6}\right)\leq \exp(-Cn^{1/3}).
\end{split}
\end{equation*}
The first inequality holds because of the boundedness of $x_{i,j}$ and $h^{'}.$
Consequently, when $\ln p_n=o(n^{1/3}),$ which is satisfied by C1, we have
\begin{equation*}
\sum\limits_{j=1}^{p_n}P\left(s_{nj}\left(\vbeta_0\right)
\geq Cn^{2/3}\right)=\exp(\ln p_n-Cn^{1/3})=o(1).
\end{equation*}
This completes the proof of the claim (\ref{claim}).

\quad Therefore, the right hand side of (\ref{inequ1}) is less than
$ c_1 n^{5/12}-c_2n^{1/2}$, which is less than $-Cn^{1/2}$ for some constant $C>0$. Combined with inequalities (\ref{the1ine1}) and (\ref{inequ2}) , this leads to
\[
\sup_{s\in \mathcal{A}_1}\ln L_n\left(\hat{\vbeta}^s\right)-\ln L_n\left(\hat{\vbeta}^{s_{0n}}\right)\leq -Cn^{1/2}.
\]

Since under C1, $p_{0n}\ln n=o(n^{1/3}), p_{0n}\ln p_n=o(n^{1/3}),$ 
we proved inequality (\ref{compare1res}).

\vspace{0.15in}
(II) {\it Case 2:  $s\in \mathcal{A}_0$}.  Let $m=|s|-\nu(s_{0n})$, Lemma 1 in Luo and Chen (2011) implies that, asymptotically, as $n\to +\infty,$ $\mbox{EBIC}_{\gamma}(s)\leq \mbox{EBIC}_{\gamma}(s_{0n})$
if and only if
\begin{equation}
\label{compare2}
\ln L_n\left(\hat{\vbeta}^s\right)-\ln L_n\left(\hat{\vbeta}^{s_{0n}}\right)\geq
m[0.5\ln n+\gamma\ln p_n].
\end{equation}
Therefore, it suffices to show 
\begin{equation}
\label{compare2res}
P\left(\sup_{s\in\mathcal{A}_0}\ln L_n\left(\hat{\vbeta}^s\right)-\ln L_n\left(\hat{\vbeta}^{s_{0n}}\right)\geq
m[0.5\ln n+\gamma\ln p_n]\right)\to 0\;\text{as}\;n\to \infty
\end{equation}
to obtain 
\[
P\left(\min_{s:s\in \mathcal{A}_0}\mbox{EBIC}_{\gamma}(s)\leq \mbox{EBIC}_{\gamma}(s_{0n})\right)\to 0\;\text{as}\;n\to +\infty.
\]

Note that Lemma \ref{lemma1} implies 
\begin{equation}
\begin{split}
\label{inequ3}
&\ln L_n\left(\hat{\vbeta}^s\right)-\ln L_n\left(\hat{\vbeta}^{s_{0n}}\right)\leq \ln L_n\left(\hat{\vbeta}^s\right)-\ln L_n\left(\vbeta_0^{s_{0n}}\right))\\
=& (\hat{\vbeta}^s-\vbeta_0^s)^{\tau}s_n(\vbeta_0^s)-\dfrac{1}{2}(\hat{\vbeta}^s-\vbeta_0^s)^{\tau}H_n(\tilde{\vbeta}^s)(\hat{\vbeta}^s-\vbeta_0^s)\\
\leq & (\hat{\vbeta}^s-\vbeta_0^s)^{\tau}s_n(\vbeta_0^s)-\dfrac{1-\epsilon}{2}(\hat{\vbeta}^s-\vbeta_0^s)^{\tau}H_{n1}(\tilde{\vbeta}^s)(\hat{\vbeta}^s-\vbeta_0^s),
\end{split}
\end{equation}
where $\xi$ is any arbitrarily small positive constant.
The applicability of the conclusion in C5 to simplify the right hand side of this inequality requires $\sup_{s\in \mathcal{A}_0}\|\hat{\vbeta}^s-\vbeta_0^s\|_2$ be approaching 0 as $n$ goes to infinity. We claim that under conditions C1-C6,
uniformly for $s\in\mathcal{A}_0,$ we have
\begin{equation}
\label{inequ4}
\|\hat{\vbeta}^s-\vbeta_0^s\|_2=O_p(n^{-1/3}).
\end{equation}
We will show this claim in the following. For any unit vector $u,$ let $\vbeta^s=\vbeta_0^s+n^{-1/3}\vu.$ Denote
$$\mathcal{T}=\Big\{\max_{s\in \mathcal{A}_0,\|\vu\|_2=1}\vu^{\tau}H_{n0}\left(\vbeta^s\right)\vu\leq Cp_{0n}n^{2/3}\Big\},$$
then Lemma \ref{lemma1} implies
\begin{equation}
\label{equation7}
\begin{split}
&P\left(\ln L_n\left(\vbeta^s\right)-\ln L_n\left(\vbeta_0^s\right)>0:\;\; \text{for some}\;\;u, s\in \mathcal{A}_0\right)\\
\leq &P\left(\ln L_n\left(\vbeta^s\right)-\ln L_n\left(\vbeta_0^s\right)>0:\;\; \text{for some}\;\;u, s\in \mathcal{A}_0| \mathcal{T}\right)+o(1).
\end{split}
\end{equation}
On $\mathcal{T},$ When
$n$ is large enough, for all $s\in \mathcal{A}_0,$ uniformly, we have
\begin{equation*}
\begin{split}
\ln L_n\left(\vbeta^s\right)-\ln L_n\left(\vbeta_0^s\right)=&n^{-1/3}\vu^{\tau}s_n\left(\vbeta_0^s\right)-\dfrac{1}{2}n^{1/3}\vu^{\tau}\left(n^{-1}H_{n1}\left(\tilde{\vbeta}^s\right)\right)\vu\\
&-\dfrac{1}{2}n^{-2/3}\left(\vu^{\tau}H_{n0}\left(\tilde{\vbeta}^s\right)\vu\right)\\
=&n^{-1/3}\vu^{\tau}s_n\left(\vbeta_0^s\right)-c_1(1-\xi)n^{1/3}/2+O(p_{0n})\\
\leq & n^{-1/3}\vu^{\tau}s_n\left(\vbeta_0^s\right)-cn^{1/3}
\end{split}
\end{equation*}
Hence, for some positive constant $c,$ we have
\begin{equation*}
\begin{split}
&P\left(\ln L_n\left(\vbeta^s\right)-\ln L_n\left(\vbeta_0^s\right)>0:\;\; \text{for some}\;\;\vu\right)\\
\leq  &P\left(\vu^{\tau}s_n\left(\vbeta_0^s\right)\geq cn^{2/3}:\;\; \text{for
some}\;\;\vu\right)\\
\leq &\sum\limits_{j\in s}P\left(s_{n,j}\left(\vbeta_0^s\right)\geq
cn^{2/3}\right)+\sum\limits_{j\in s}P\left(-s_{n,j}\left(\vbeta_0^s\right)\geq cn^{2/3}\right)
\end{split}
\end{equation*}
From (\ref{claim}), we know that $\sum\limits_{i\in \mathcal{A}_0}\sum\limits_{j\in
s}P\left(s_{n,j}\left(\vbeta_0^s\right)\geq cn^{2/3}\right)=o(1).$ The same for the second
term. Therefore,
\begin{equation}
\label{equation8}
P\left(\ln L_n\left(\vbeta^s\right)-\ln L_n\left(\vbeta_0^s\right)>0:\;\; \text{for some}\;\;\vu, s\in \mathcal{A}_0\right)=o(1).
\end{equation}
Because $\ln L_n\left(\vbeta^s\right)$ is a concave function for any $\vbeta^s,$ the
maximum likelihood estimator $\hat{\vbeta}^s$ exists and falls
within a $n^{-1/3}$ neighborhood of $\vbeta_0^s$ uniformly for $s\in
\mathcal{A}_0.$ Thus, we have
$P\left(\|\hat{\vbeta}^s-\vbeta_0^s\|_2=O(n^{-1/3})\right)\rightarrow 1.$

Now we can apply C5, the right hand side of (\ref{inequ3}) can be upper bounded by
\begin{equation*}
\begin{split}
 & (\hat{\vbeta}^s-\vbeta_0^s)^{\tau}s_n(\vbeta_0^s)-\dfrac{(1-\xi)(1-\epsilon)}{2}(\hat{\vbeta}^s-\vbeta_0^s)^{\tau}H_{n1}(\vbeta_0^s)(\hat{\vbeta}^s-\vbeta_0^s)\\
\leq & \dfrac{1}{2(1-\epsilon)}s_n^{\tau}(\vbeta_0^s)\{H_{n1}(\vbeta_0^s)\}^{-1} s_n(\vbeta_0^s)
\end{split}
\end{equation*}
where $\epsilon$ is an arbitrarily small positive value. Hence, the left hand side of (\ref{compare2res})
 is no more than
\begin{equation}
\label{equation10}
\begin{split}
&P\left(\dfrac{1}{2(1-\epsilon)}s_n^{\tau}(\vbeta_0^s)\{H_{n1}(\vbeta_0^s)\}^{-1} s_n(\vbeta_0^s)\geq
m[0.5\ln n+\gamma\ln p_n]\right)\\
\leq &|\mathcal{A}_0|\exp(-m(1-\epsilon)[0.5\ln n+\gamma\ln p_n])\\
\leq & \exp\left(m[(\ln (p_n-p_{0n}) -(1-\epsilon)\gamma\ln p_n-\dfrac{(1-\epsilon)}{2}\ln n]\right)
\end{split}
\end{equation}
It converges to 0 when $\gamma>\dfrac{1}{1-\epsilon}[1-\dfrac{\ln n}{2\ln p_n}]$.

\end{proof}

\newpage

\begin{center}
\begin{table}[h]
\caption{ The PDR and FDR of the forward selection procedure with EBIC under simulation setting 1 (the PDR and FDR are averaged over 200 replicates, the numbers in parenthesis are standard errors) }
\label{simutable1}
\vspace{0.15in}
\begin{tabular}{cc|cc|cc|cc|cc} \hline
\multicolumn{2}{c|}{} & \multicolumn{2}{c|}{$\gamma_1$}&\multicolumn{2}{c|}{$\gamma_2$}&\multicolumn{2}{c|}{$\gamma_3$}&\multicolumn{2}{c}{$\gamma_4$} \\ \hline
$\rho$&$n$ &  $\mbox{PDR}$& $\mbox{FDR}$ & $\mbox{PDR}$& $\mbox{FDR}$ & $\mbox{PDR}$& $\mbox{FDR}$ & $\mbox{PDR}$& $\mbox{FDR}$  \\ \hline
0&100  &0.736& 0.375&0.735&0.362&0.646& 0.193&0.481& 0.074 \\
 & & (0.281)&(0.292)&(0.284) &(0.291)&(0.382)&(0.228)&(0.453)&(0.141) \\ \cline{2-10}
&200   &0.930 & 0.272&0.918& 0.223&0.879 & 0.127& 0.862& 0.078\\
 &&(0.220)&(0.252)&(0.253) &(0.215)&(0.311)&(0.147)&(0.337) & (0.108)\\  \cline{2-10}
& 500  &0.971 & 0.408&0.963 & 0.371&0.939 & 0.079&0.936 & 0.026 \\
 &&(0.135)&(0.181)&(0.163)&(0.152)&(0.231)&(0.119)&(0.238)&(0.062)\\  \hline
 0.3&100 &0.708& 0.407 & 0.708& 0.398& 0.621 & 0.196&0.471 & 0.081 \\
&&(0.298) & (0.296)& (0.298) &(0.306)& (0.384)&(0.230)&(0.442)& (0.152)\\ \cline{2-10}
 &200   &0.933 & 0.281 & 0.924 & 0.239& 0.889 & 0.143&0.855 & 0.083 \\
 &&(0.202)& (0.248)& (0.232)&(0.212)& (0.303)&(0.161)&(0.344)& (0.111) \\  \cline{2-10}
& 500  &0.969 & 0.428 &0.959 & 0.354&0.938 & 0.047&0.933 & 0.014\\
 &&(0.130)& (0.169)&(0.177) &(0.138)&(0.238) &(0.091)&(0.247)&(0.048) \\  \hline
 0.5&100 &0.712 & 0.401 & 0.711 & 0.383&0.632 & 0.201&0.451& 0.080 \\
&&(0.293)& (0.295)& (0.294)&(0.292)& (0.385)&(0.223)&(0.447) &(0.146) \\ \cline{2-10}
&200   &0.929& 0.281& 0.923 & 0.243& 0.881 & 0.128&0.858 & 0.084 \\
 &&(0.219) & (0.257)& (0.236)&(0.223)& (0.313)&(0.130)&(0.343)& (0.110)\\  \cline{2-10}
& 500  &0.967 & 0.434& 0.959 & 0.371&0.939& 0.043&0.933 & 0.006\\
 &&(0.142)&(0.166) & (0.168)&(0.147)& (0.235) &(0.085)&(0.249)&(0.031) \\  \hline
 0.7&100 &0.674 & 0.432 & 0.674& 0.414& 0.606 & 0.244&0.430& 0.092\\
&&(0.291)&(0.289) &(0.291)  &(0.287)&(0.365) &(0.241)&(0.432) &(0.144) \\ \cline{2-10}
&200   &0.931 & 0.292& 0.926 & 0.248& 0.888 & 0.148&0.874 & 0.112\\
 &&(0.196)& (0.246) & (0.218)&(0.207)&(0.295) &(0.146)&(0.314)&(0.125) \\  \cline{2-10}
& 500  &0.970 & 0.427 &0.966 & 0.365& 0.937 & 0.032&0.934 & 0.010 \\
 &&(0.134)& (0.173)& (0.150)&(0.150)& (0.234)&(0.072)&(0.240)& (0.038)\\  \hline
\end{tabular}
\end{table}
\end{center}

\begin{center}
\begin{table}[h]
\caption{The PDR and FDR of the forward selection procedure with EBIC under simulation setting 2 (the PDR and FDR are averaged over 200 replicates, the numbers in parenthesis are standard errors) }
\label{simutable2}
\vspace{0.15in}
\begin{tabular}{cc|cc|cc|cc|cc} \hline
\multicolumn{2}{c|}{} & \multicolumn{2}{c|}{$\gamma_1$}&\multicolumn{2}{c|}{$\gamma_2$}&\multicolumn{2}{c|}{$\gamma_3$}&\multicolumn{2}{c}{$\gamma_4$} \\ \hline
$\rho$&$n$ &  $\mbox{PDR}$& $\mbox{FDR}$ & $\mbox{PDR}$& $\mbox{FDR}$ & $\mbox{PDR}$& $\mbox{FDR}$ & $\mbox{PDR}$& $\mbox{FDR}$  \\ \hline
0.3 &100 &0.662 &0.424 & 0.660  &0.409&0.594& 0.233&0.492& 0.132\\
&&(0.272)&(0.287) & (0.276)&(0.286)& (0.350)&(0.237)&(0.392)&(0.195) \\ \cline{2-10}
&200  &0.931 & 0.256 & 0.926 & 0.231& 0.891& 0.111&0.881 & 0.068 \\
 &&(0.199)& (0.245)& (0.212)&(0.222)& (0.281) &(0.137)&(0.295)& (0.101)\\  \cline{2-10}
& 500  &0.973 & 0.401 & 0.967 & 0.339& 0.946 & 0.041&0.941 & 0.018\\
 &&(0.127)&(0.173) &(0.149) &(0.134)& (0.209)&(0.089)&(0.217)&(0.055) \\  \hline
0.5&100 &0.571 &0.489 & 0.570 &0.478& 0.521& 0.304&0.442& 0.189\\
&&(0.259)& (0.274)& (0.261)&(0.276)& (0.303)&(0.265)&(0.337)&(0.230) \\ \cline{2-10}
&200 &0.918& 0.272 & 0.910 & 0.239& 0.888 & 0.121&0.869 & 0.081\\
 &&(0.204) & (0.256)& (0.230)&(0.231)& (0.267)&(0.148)&(0.293)& (0.122)\\  \cline{2-10}
& 500  &0.970 & 0.402 & 0.964 & 0.351& 0.946 & 0.056&0.942 & 0.021\\
 &&(0.129)& (0.183)&(0.148) &(0.153)& (0.199)&(0.115)&(0.212)&(0.062) \\  \hline
\end{tabular}
\end{table}
\end{center}

\begin{center}
\begin{table}[h]
\caption{The PDR and FDR of the forward selection procedure with EBIC under simulation setting 3 (the PDR and FDR are averaged over 200 replicates, the numbers in parenthesis are standard errors) }
\label{simutable2}
\vspace{0.15in}
\begin{tabular}{c|cc|cc|cc|cc} \hline
 & \multicolumn{2}{c|}{$\gamma_1$}&\multicolumn{2}{c|}{$\gamma_2$}&\multicolumn{2}{c|}{$\gamma_3$}&\multicolumn{2}{c}{$\gamma_4$} \\ \hline
$n$ &  $\mbox{PDR}$& $\mbox{FDR}$ & $\mbox{PDR}$& $\mbox{FDR}$ & $\mbox{PDR}$& $\mbox{FDR}$ & $\mbox{PDR}$& $\mbox{FDR}$  \\ \hline
100 &0.586 & 0.506& 0.586 &0.484& 0.524 &0.332&0.387 &0.198\\
& (0.258)&(0.252) &(0.258)&(0.253) &(0.316)&(0.252)&(0.366)& (0.239) \\ \hline
200   &0.796 & 0.414& 0.791 & 0.386& 0.767& 0.285&0.746 & 0.221 \\
 &(0.261) & (0.282)&(0.274)& (0.273)&(0.311) &(0.247)& (0.334)&(0.228)\\  \hline
 500  &0.946 & 0.479& 0.936 & 0.416& 0.912 & 0.195&0.896 & 0.171 \\
 & (0.167)& (0.165)&(0.197)& (0.150)&(0.248)&(0.185)&(0.269)&(0.176) \\  \hline
\end{tabular}
\end{table}
\end{center}

\begin{center}
\begin{table}[h]
\caption{ Analysis of  Leukemia Data: the top 50 genes selected by the forward selection procedure with the four links:  logit (lo), probit (pr),  cauchit (ca) and cloglog (cl)}
\label{gene1}
\vspace{0.15in}
\begin{tabular}{ccccccccccc} \hline
       &\multicolumn{10}{c}{Rank and Gene ID}  \\ \hline
     & 1 & 2 & 3& 4& 5& 6& 7 & 8& 9 & 10   \\ \hline
lo & $1834^{\ast \triangle\star}$ & 4438 & 4951 & $6539^{\star}$ & 155 & 2181 & $1882^{\ast \triangle\star}$ & 6472 & 65 & 1953 \\
pr & $1834^{\ast \triangle\star}$ & 4438& 4951& 155& 5585&5466& 706&  $7119^{\star}$&3119 & 4480 \\
ca &
 $1882^{\ast \triangle\star}$ & 4951&$6281^{\star}$& 4499& 4443 &$ 6539^{\star}$& 5107& $1834^{\ast \triangle\star}$&4480& 6271 \\
cl &
$ 1834^{\ast \triangle\star}$& $6855^{\ast \triangle\star}$& 4377&5122 & 2830& 4407& 4780& 6309& $ 4973^{\star}$& 715 \\ \hline
 & 11 & 12 & 13& 14& 15 &16& 17 & 18 & 19 & 20 \\ \hline
lo & 3692& 706 & 1787 & $5191^{\star}$& 1239 &3119& 2784& 1078& 3631& 6308  \\
pr 
&$ 6201^{\triangle}$&490& 6895& $1882^{\ast \triangle\star}$& 1809& 2855& 3123& $ 4211^{\ast}$&$ 2020^{\ast\star}$& 3631 \\
ca
 & 6378 & 3631& $2111^{\star}$&$  6201^{\triangle}$& $ 6373^{\star}$& 1800& 4780& 321 & $4107^{\triangle}$& $1779^{\triangle} $\\
cl
 & 5376& 930& 1800& $1882^{\ast\triangle \star}$& 5794& 4399& $4389^{\star} $& 922&  1962& 4267 \\ \hline 
 &21 & 22& 23 & 24 & 25 & 26 & 27 & 28 & 29 & 30 \\ \hline 
 lo &$ 6373^{\star}$&$1909^{\star}$& 4153& $ 1685^{\triangle}$& $ 6855^{\ast\triangle\star}$&  7073& 5539& 2830& 4819&6347 \\
pr  & 5823& 1953 &$1745^{\triangle\star}$& 65&997&$ 1928^{\star}$&3307& 1787& 538& 5539 \\
ca & 6277& 1544&$ 5254^{\star}$& $ 1928^{\star}$&$  1745^{\triangle\star}$&  3163& 7073& 310&$ 4389^{\star}$&  5146 \\
cl  & 1926 &4229&$ 5254^{\star}$& 770& 2141& 6923& 7073& 2828&$ 4847^{\star}$& 698 \\ \hline 
 & 31 & 32 &33 &34 &35 &36 &37 &38 &39 & 40 \\ \hline 
lo &1081& 1095& 5328& 4279& 4373& 5737 &4366 & 5280 & 3307 &284 \\
 pr & 4107& 2385& 1087& $1909^{\star}$& 5376 & 5552& 6005& 1604& 3391& 5442 \\
ca &1927& 885& 3137 & 2258 & 4334& 6657& 2733& 5336 & 5972 & 6167 \\
cl &1779 &$1928^{\star}$& 4049& 876&  6857& 6347& $ 6376^{\star}$& 2361& 4664& 758  \\ \hline 
& 41 &42 &43 &44 & 45 & 46 & 47 &48& 49 & 50 \\ \hline
lo& 6676 &4291& 1945&4079& 3722& 668 &782& $ 4196^{\star}$&25 & $ 4389^{\star} $ \\ 
 pr & 6702 & 6309 & $2348^{\star}$ & 4282& 4925& 6167& 2323& 1779& 5122&$ 3847^{\star} $\\ 
ca & 4229&$ 4328^{\star}$&  715& 4149& $ 5191^{\star}$& 6283& 200&  6702& 5794& 4190  \\ 
cl & 3631& 6308& 4499& 4480& 5971& 6510& 5300& 3475& 3932& 6801 \\ \hline
\end{tabular}
\end{table}
\end{center}

\begin{center}
\begin{table}[h]
\caption{Analysis of  Leukemia Data: the final selected genes by EBIC}
\label{gene2}

\vspace{0.15in}
\begin{tabular}{c|c|c} \hline
Link Function &  Selected Genes & Maximum Likelihood  \\ \hline 
logit &$ 1834, \ \  4438$     & -2.296e-08 \\
probit & $1834, \ \ 4438$& -3.022e-08  \\
cauchit & $1882, \ \ 4951$ & -2.122e-06 \\
cloglog &  $1834, \ \  6855$& -6.908e-08 \\ \hline
\end{tabular}
\end{table}
\end{center}


\begin{thebibliography}{3}

\bibitem{Akaike 1973}Akaike, H.(1973). Information theory and an extension of the maximum likelihood principle. {\em Second International Symposium on Information Theory}, Akademiai Kiado, 267- 281.

\bibitem {Bogdan et al 2004} 
Bogdan, M, Ghosh, J. K, Doerge, R. W. (2004). Modifying the Schwarz Bayesian information criterion to locate multiple interacting
quantitative trait loci. {\it Genetics}, 2004. {\bf 167}, 989 - 999.

\bibitem {Chen and Chen 2008}
Chen, J.  , Chen, Z. (2008). Extended Bayesian information criteria for
model selection with large model spaces. {\it Biometrika.} {\bf 95}, 759-771.

\bibitem {Chen and Chen 2012}
Chen, J,  , Chen, Z.  (2012). Extended BIC for small-n-large-p sparse GLM. Statistical Sinica.

\bibitem{Chen and Luo 2011}
 Luo, S. and Chen, Z.(2011). Extended BIC for linear regression models with diverging number of relevant features and high or  ultra-high feature spaces. Available from http://www.stat.nus.edu.sg/~stachenz/.

\bibitem{Czado and Munk 2000}
Czado, C. and Munk, A. (2000). Noncanonical links in generalized linear models - when is the effort justified? {\it J. Stat. Plan. Infer}, {\bf 87}, 317-345.

 \bibitem {Fan and Li 2001}
Fan, J. Q. and Li, R. (2001). Variable selection via non-concave penalized likelihood and its oracle properties.
 {\it J. Am. Stat. Assoc.} {\bf 96}, 1348-1360.
 
\bibitem{Fan and Song 2010}
Fan, J.Q. and Song, R. (2010). Sure Independence Screening in Generalized Linear Models with NP-dimensionality. {\it Ann. Stat}, {\bf 38}, 3567-3604.

\bibitem{Golub et al 1999}
Golub, T.R., Slonim, D.K., Tamayo, P., Huard, C., Gaasenbeek, M., Mesirov, J.P., Coller, H.,  et al (1999). Molecular Classification of Cancer: Class Discovery and Class Prediction by Gene Expression Monitoring. {\it Science}, {\bf 286}, 531-537.


\bibitem{HuangEtal2010}
Huang, J., Horowitz, J. L., and Wei, F. R. (2010). Variable selection in nonparametric additive
models. {\it  Annals of Statistics}, {\bf  38(4)} :2282- 2313.

\bibitem{Lee et al 2003}
Lee, K.E., Sha, N.J., Dougherty, E.R., Vannucci, M. and Mallick, B.K.(2003). Gene selection: a Bayesian variable selection approach. {\it Bioinformatices}, {\bf 19}, 90-97.

\bibitem{Li and Chen 2009}
Li, W. and Chen, Z. (2009). Multiple interval QTL mapping for trait distribution with a spike. {\it Genetics}, {\bf 182}, 337-342.


\bibitem{Liao and Chin 2007}
Liao, J.G and Chin, K.V. (2007). Logistic regression for disease classification using microarray data: model selection in a large $p$ small $n$ case. {\it Bioinformatics}, {\bf 23}, 1945-1951.

\bibitem{LuoChen2011}
Luo, S. and Chen, Z. (2011). Sequential Lasso for feature selection with ultra-high dimensional feature space.
 arXiv:1107.2734v1 

\bibitem{McCullagh and Nelder 1989}
McCullagh, P. and Nelder, J. A.  (1989). Generalized linear models. Second Edition. Chapman and Hall, London.

\bibitem {Stone 1974}
Stone, M. (1974). Cross-validatory choice and assessment of
statistical predictions (with discussion). {\it J. Roy. Statist.
Soc. B (Methodological)} {\bf 39}, 111-147.

\bibitem {Schwarz 1978}
Schwarz, G. (1978). Estimating the dimension of a model. {\it Ann. Statist.} {\bf 6}, 461-464.

\bibitem {Tibshirani 1996} Tibshirani, R. (1996). Regression shrinkage and selection via the LASSO. {\it J. Roy. Statist. Soc. B (Methodological)} {\bf 58}, 267-288.

\bibitem{Tropp2004} Tropp, J.A.  (2004). Greed is good: Algorithmic results for sparse approximation. {\it IEEE Transactions on Information Theory}, {\bf 50}, 1–21.

\bibitem{TroppEtal2007}
 Tropp,  J.A.  and Gilbert, A.C. (2007). Signal recovery from random measurements
via orthogonal matching pursuit.  {\it IEEE Transactions on Information Theory}, {\bf 53}, 4655–4666.


\bibitem{Wang2009}
Wang, H. (2009). Forward Regression for Ultra-High
Dimensional Variable Screening.  {\it J. Am. Stat. Assoc.}{\bf 104}, 1512-1524.


\bibitem{WangEtal2009}
Wang, H., Li, B. and Leng, C. (2009). Shrinkage tuning parameter selection with a
diverging number of parameters. {\it J. R. Statist. Soc. B } {\bf 71}, 671-683. 



\bibitem{Wedderburn 1976}
Wedderburn, R.W.M. (1976). On the existence and uniqueness of the maximum likelihood estimates for certain generalized linear models.
{\it Biometrika}, {\bf 63}, 27-32.

\bibitem{ZhaoChen2012}
Zhao, J. and Chen, Z. (2012). A Two-Stage Penalized Logistic Regression
Approach to Case-Control Genome-Wide
Association Studies. {\it Journal of Probability and Statistics}. 
doi:10.1155/2012/642403

\bibitem{ZouHastie2005}
 Zou, H. and Hastie, T. (2005). Regularization and variable selection via the elastic net {\it Journal of the Royal Statistical Society: Series B} 
{\bf 67 (2)}, 301–320. 


\end{thebibliography}
\end{document}